\newtheorem{problem}{Question}%[section]
\newtheorem{theorem}[problem]{Theorem}
\newtheorem{lemma}[problem]{Lemma}
\newtheorem{conjecture}[problem]{Conjecture}
\title{Torus Queen Independence}
\author{Kada Kálmán Williams}
\begin{document}

\maketitle

\begin{abstract}
    Define a queen on $\mathbb{Z}_n^d$ with admissible moves parallel to $\mathbf{x}\in\{-1,0,1\}^d$, of arbitrary length. How many queens can be placed on $\mathbb{Z}_n^d$ without any two in conflict? In two dimensions, this problem was initiated by Pólya in 1918 and resolved by Monsky in 1989. We give the first known impossibility result in $d>2$ dimensions, showing that a trivial upper bound $n^{d-1}$ cannot be achieved if $n$ is a multiple of $5$ and not of $25$. Moreover, we conjecture that $n^{d-1}-O(n^{d-2})$ queens can be placed independently, which we prove if $n$ has no prime divisor less than $2^{\lfloor d/2\rfloor +1}$.
\end{abstract}

\textbf{AMS subject classification:} 05C35, 05C69

\section{Introduction}

The sport of chess is played on an $8\times 8$ board, where various figurines occupy unique fields and move between them. A king, to begin with, moves from its field onto any of the eight fields that share a corner or an edge with it. A bishop and a rook move diagonally and horizontally or vertically, respectively. A queen can move a bishop move or a rook move -- parallel to a king move, but unbounded.

A number of journals are devoted to investigating the game theoretic aspects of chess, but we are going to relate chess to graph theory \cite{HeH}. Consider a figurine graph whose nodes are the fields of a chess board, joined by an edge if a certain figurine, such as a queen, can move between those fields.\footnote{We remark that in chess, this relation is not symmetric for pawn moves, but aside from that awkward case, a simple graph is obtained.} For such a graph as the queen graph, the inspection of certain graph parameters, e.g. the domination and independence numbers, could interest chess players and combinatorialists alike.

While there are $2n-1$ diagonals in either direction on the $n\times n$ board, if moving across the boundary is permitted, as though the board were $\mathbb{Z}\times \mathbb{Z}$ modulo $n$, there are only $n$ diagonals. We identify the fields of this modular, toroidal board with $\mathbb{Z}_n^2$ \cite{BeS}. It is well-known that $n$ queens can be placed without conflict on the $n\times n$ board \cite{Pau}, or in other words, that there is an independent set of size $n$ in the $n\times n$ queen graph. However, an independent set of $n$ queens exists on the toroidal board $\mathbb{Z}_n\times \mathbb{Z}_n$ if and only if $n\equiv \pm 1\pmod 6$ \cite{Pol}.

We may further inquire into what happens when $d=2$ is replaced with a higher dimensionality $d>2$. (For sake of completeness, we determine the independence number of the queen graph on $\mathbb{Z}_n^2$ in the Appendix, following \cite{Mon}.) In order to do that, we must specify how a queen moves on $\mathbb{Z}_n^d$. In this regard, we mention three different possible definitions. Firstly, a queen could move like a two-dimensional queen while any $n-2$ coordinates are fixed, which we refer to as a plane queen. Secondly, a queen could move along a hyperplane with normal vector in $\{-1,0,1\}^d$ \cite{Nud}, to be known as a dual queen. Thirdly, a queen could move parallel to a nonzero vector in $\{-1,0,1\}^d$ \cite{BeS}, which we simply call queen. For the first and third versions of queen, we conjecture there is a placement of $n^{d-2}(n-O(1))$ queens without conflict in $d\ge 2$ dimensions, which we prove if all prime divisors of $n$ are at most $2^{\lceil \frac{d+1}{2}\rceil}$. Furthermore, we give the first known impossibility proof concerning a queen in $d\ge 3$ dimensions, establishing that $n^2$ queens cannot be placed on $\mathbb{Z}_n^3$ without conflict if $n\equiv \pm 5,\pm 10\pmod{25}$.

\section{Constructions}

We now turn to proving lower bounds for the possible size of an independent subset of the queen graph on $\mathbb{Z}_n^d$, meaning that fields are connected with an edge in this graph if they represent a queen move. For $d=3$, Klarner \cite{Kla} showed that $n^2$ independent queens on $\mathbb{Z}_n^d$ are attainable if $n$ is coprime to all the primes up to $2^d-1=7$, and like Van Rees \cite{VaR}, we shall extend this to $d>3$ dimensions. % Building on the work of Monsky and Klarner, we obtain our main result, giving an explicit independent set of $n^{d-1}-O(n^{d-2})$ queens on $\mathbb{Z}_n^d$.

Although our definitions of a plane queen, a dual queen, and a queen coincide in $d=2$ dimensions, there is a distinction between these in $d>2$ dimensions, on $\mathbb{Z}_n^d$. Nudelman \cite{Nud} found an example of $n$ dual queens on $\mathbb{Z}_n^d$ if $n$ is coprime with $(2^d-1)!$. We give an example of $n^{d-1}$ queens on $\mathbb{Z}_n^d$ if $n$ is coprime with $(2^d-1)!$ and show that a weaker condition suffices for plane queens.

\begin{theorem}\label{dim}
On the $d$-dimensional toroidal board $\mathbb{Z}_n^d$, there is an independent set of $n^{d-1}$ queens if $n$ admits no prime factor less than $2^d$. Moreover, there is an independent set of $n^{d-1}$ plane queens if $n$ is odd and coprime with $2^j\pm 1$ for $1\le j\le d-1$.
\end{theorem}

\begin{proof}
Consider placing queens on the following lattice of $n^{d-1}$ fields:
$$\{(t_1,-2t_1+t_2,-2t_2+t_3,\ldots,-2t_{d-1}):t_1,\ldots,t_{d-1}\in \mathbb{Z}_n\}.$$
Suppose, for the sake of contradiction, that now there are two queens in conflict. By definition, this occurs if and only if their displacement vector is $\mathbf{e}\in \{-1,0,1\}^d$ scaled by $\lambda\in \mathbb{Z}_n$; in the case of plane queens, $\mathbf{e}$ has at most two nonzero coordinates. Since the lattice is defined by linear expressions, the displacement between any two fields also has this form. Hence, if two queens on these fields are in conflict, there is a nontrivial solution of
$$(t_1,-2t_1+t_2,-2t_2+t_3,\ldots,-2t_{d-1})=\lambda(e_1,e_2,\ldots,e_d).$$
In particular, $t_1=e_1 \lambda$, and by solving the recursion $-2t_{l-1}+t_l=\lambda e_l$ ($2\le l\le d$ and $t_d:=0$),
$$t_l=\lambda(e_l+2e_{l-1}+\ldots+2^{l-1} e_1),\qquad l=1,2,\ldots,d.$$
The final equation reads $0=\lambda(e_d+2e_{d-1}+\ldots+2^{d-1} e_1)$, which implies $\lambda=0$, provided the bracket is coprime with $n$. Let the first instance of a sign $\pm 1$ in the list $e_d,e_{d-1},\ldots,e_1$ occur at $e_{d-i}$. The bracket, then, is $2^i$ times an odd integer bounded by $2^0+2^1+\ldots+2^{d-1}=2^d-1$. Indeed, for plane queens, this odd integer is of the form $\pm 1$ or $\pm 1\pm 2^j$. Hence, if $n$ has no prime factor less than $2^d$, then the number in the bracket is coprime with $n$, and contradiction ensues. For plane queens, it suffices for $n$ to be odd and coprime with $2^j\pm 1$, where $1\le j\le d-1$.
\end{proof}

\textbf{Remark.} While it is possible to place $n$ independent queens on an $n\times n$ board for $n\ge 4$, it is unknown whether $n^2$ independent queens can be placed on an $n\times n\times n$ board for sufficiently large $n$. For small cases, Kunt \cite{Kun} gave a computational proof that if $1\le n\le 14$, this can only be done for $n=11$ or $n=13$. 

Klarner \cite{Kla} showed the $n^2$ queens on $(t_1,t_2,3t_1+5t_2)\in \mathbb{Z}_n^3$ are independent for these values of $n$, and Van Rees generalised this observation to $d$ dimensions \cite{VaR}, \cite{Kun}. However, our example in Theorem \ref{dim} adapts well to plane queens, and we will make use of it when constructing independent sets of $n^2-O(n)$ queens in $\mathbb{Z}_n^3$, where $n$ is not a multiple of $2$ or $3$.

\begin{conjecture}\label{dimqueens}
There is an independent set of $n^{d-1}-O(n^{d-2})$ queens on $\mathbb{Z}_n^d$.
\end{conjecture}

Analogously, Nudelman \cite{Nud} conjectured that for each dimensionality $d$, $n-O(1)$ many dual queens can be placed on $\mathbb{Z}_n^d$ without conflict.

By modifying the construction we gave to prove Theorem \ref{dim}, we can confirm the claim of Conjecture \ref{dimqueens} in the case that $n$ admits no prime factor less than $2^{\lfloor d/2\rfloor +1}$.

\begin{theorem}\label{dqueens}
For any $d\ge 3$, on the $d$-dimensional toroidal board $\mathbb{Z}_n^d$, there is an independent set of $n^{d-1}-O(n^{d-2})$ queens if the smallest prime factor of $n$ exceeds $2^{\lfloor d/2\rfloor +1}$.
\end{theorem}

\begin{proof}
For any tuple $\mathbf{t}=(t_1,t_2,\dots,t_{d-1})$ ($t_j\in \mathbb{Z}_n$, $1\le j\le d-1$), let us place a queen upon
$$\mathbf{x}=(t_1,t_2,\dots,t_{d-1},-2^{d-1}t_1-2^{d-2}t_2-\ldots-2^1 t_{d-1}+a(\mathbf{t})),$$
where $a(\mathbf{t})$ is a step function to be specified.

Suppose that some two queens are in conflict along the diagonal of $\mathbf{e}\in \{-1,0,1\}^d\setminus \{\mathbf{0}\}$. Clearly, $(e_1,\dots,e_{d-1})\neq \mathbf{0}$. Now if $a$ were identically zero, then the fields with a queen would satisfy
$$f(\mathbf{x}):=2^0 x_d+2^1x_{d-1}+\ldots +2^{d-1}x_1=0,$$
and so if the queens differ by $\lambda \mathbf{e}$, then $f(\lambda \mathbf{e})=\lambda f(\mathbf{e})=0$ in $\mathbb{Z}_n$. Let $m$ equal
$$f(\mathbf{e})=2^0 e_d+2^1e_{d-1}+\ldots +2^{d-1}e_1,$$ 
a nonzero integer with $|m|\le 2^0+2^1+\ldots +2^{d-1}=2^d-1$. 

With $m^*=\text{gcd}(m,n)$, it is the multiples of $\frac{n}{m^*}$ in $\mathbb{Z}_n$ for which $\lambda m\equiv 0\pmod n$. To distinguish between $m^*>1$ queens in conflict, being displaced by multiples of $\frac{n}{m^*}\mathbf{e}$, we include a perturbation $a$ that varies modulo $m^*$ for each possible value of $m^*>1$. Since the square of a prime larger than $2^{\frac d2}$ is more than $m$, if $n$ has no prime factor up to $2^{\lfloor d/2\rfloor +1}=2^{\lceil \frac{d+1}{2}\rceil}$, then $m^*> 2^{\lfloor d/2\rfloor +1}$ is prime.

To determine $a$ modulo $m^*=p$, where $2^{\lceil \frac{d+1}{2}\rceil}<p<2^d$ and $p$ is a prime divisor of $n$, we shall divide $[0,n)$ into $p$ equal intervals at multiples of $\frac{n}{p}$. In preparation for this, we consider
$$b(\mathbf{t})=f(\mathbf{t},0)+2^{\lfloor \frac{d-1}{2}\rfloor}t_1+2^{\lfloor \frac{d-1}{2}\rfloor-1}t_2+\ldots + 2^1t_{\lfloor \frac{d-1}{2}\rfloor},$$
which has the crucial property that $b(e_1,\ldots,e_{d-1})$ is coprime with $p$ if $p|f(\mathbf{e})$, because
$$b(e_1,\ldots,e_{d-1})-f(\mathbf{e})=-2^0e_d+2^1e_{\lfloor \frac{d-1}{2}\rfloor}+2^2e_{\lfloor \frac{d-1}{2}\rfloor-1}+\ldots +2^{\lfloor \frac{d-1}{2}\rfloor}e_1$$
is bounded by $2^{\lfloor \frac{d-1}{2}\rfloor+1}\le 2^{\lceil \frac{d+1}{2}\rceil}<p$ and cannot vanish. Indeed, if it were null, it would follow that $e_1=e_2=\ldots=e_{\lfloor \frac{d-1}{2}\rfloor}=0$, so that $p\le |f(\mathbf{e})|\le 2^0+2^1+\ldots+2^{d-\lfloor \frac{d-1}{2}\rfloor-1}<2^{\lceil \frac{d+1}{2}\rceil}$, a contradiction. Returning to $a$, we let $a(\mathbf{t})=a(b(\mathbf{t}))$, such that if $b\mod n$ is the $i$-th interval $\left[(i-1)\frac np,i\frac np\right)$, then $a\equiv i\pmod p$. Additionally, for any other prime $q<2^d$, we let $a$ be divisible by the largest power of $q$ less than $2^d$. For each possible value of $b$, these conditions can be met, by the Chinese Remainder Theorem, with $0\le a<(2^d-1)!$, meaning $a=O(1)$.

Suppose, now, that a conflict occurs because we placed queens at $\mathbf{x}$ and $\mathbf{x}+\lambda \mathbf{e}$. Observe that $f(\mathbf{x})=a(\mathbf{t})$ and $f(\mathbf{x}+\lambda \mathbf{e})=a(\mathbf{t'})$ for some tuples $\mathbf{t}$ and $\mathbf{t}'$, whence
$$\lambda f(\mathbf{e})=a(\mathbf{t'})-a(\mathbf{t}),$$
and we let $f(\mathbf{e})=m$. The integer $a(\mathbf{t}')-a(\mathbf{t})$ is thus divisible by $m^*=\text{gcd}(m,n)$. In fact, $\frac{m}{m^*}$ and $n$ are coprime, because every common prime factor of $m$ and $n$ is more than $2^{d/2}$ and $m<2^d$. Thus, both $a(\mathbf{t})$ and $a(\mathbf{t}')$ are divisible by $\frac{m}{m^*}$. Hence, $\lambda_0=\frac{a(\mathbf{t}')-a(\mathbf{t})}{m}\in \mathbb{Z}$ is bounded and $\lambda \in\{\lambda_0+j\cdot \frac{n}{m^*}\in \mathbb{Z}_n|j=0,1,\ldots,m^*-1\}$.

In the case where $\lambda\neq \lambda_0$, $m^*>1$, so $m^*$ is a prime divisor $p$ of $n$. With $a(\mathbf{t}')\equiv a(\mathbf{t})\pmod p$, there is an $i\in \{1,2,\dots,p\}$ such that $b(\mathbf{t'}),b(\mathbf{t})\in\left[(i-1)\frac np,i\frac np\right)$. On the other hand, 
$$b(\mathbf{t'})-b(\mathbf{t})=\lambda b(e_1,\dots,e_{d-1}),$$
where $\lambda$ is multiplied by a number coprime to $p$. Hence, if $\lambda=\lambda_0+j\cdot \frac{n}{p}$ and $j$ does not vanish modulo $p$, then up to an $O(1)$ term, $b(\mathbf{t'})-b(\mathbf{t})$ is a nontrivial multiple of $\frac np$. This can be avoided if we remove the queens for which $b(\mathbf{t})$ is within $O(1)$ distance of any $i\cdot \frac np$.

In the case where $\lambda =\lambda_0$, $b(\mathbf{t'})-b(\mathbf{t})=O(1)$, yet $a(b(\mathbf{t}'))\neq a(b(\mathbf{t}))$. By removing the queens for which $b(\mathbf{t})$ is within $O(1)$ distance of any of the jumps of $a(b)$, these conflicts disappear.

Since $n$ is odd, for each value of $t_1,\ldots,t_{d-2}$, $b(\mathbf{t})\in \mathbb{Z}_n$ is surjective in $t_{d-1}$. Thus, by erasing $O(1)$ of its values, $O(n^{d-2})$ queens are lost, leaving $n^{d-1}-O(n^{d-2})$ independent queens.
\end{proof}

\section{An impossibility proof}

Given an independent set of queens on $\mathbb{Z}_n^d$, restricting to fields with a fixed final coordinate yields an independent set of queens on $\mathbb{Z}_n^{d-1}$ for $d\ge 2$. Hence, at most $n$ times as many queens can be placed on $\mathbb{Z}_n^d$ independently as on $\mathbb{Z}_n^{d-1}$. By induction, the size of an independent set of queens on $\mathbb{Z}_n^d$ is at most $n^{d-1}$.

By Pólya's result in two dimensions \cite{Pol}, a set of $n^{d-1}$ queens on $\mathbb{Z}_n^d$ cannot be independent if $n$ is a multiple of $2$ or $3$, where $d\ge 2$. Similarly to Nudelman \cite{Nud}, we exhibit a result in $d\ge 3$ dimensions that shows this also holds if $n$ is divisible by $5$ and not by $5^2$.

\begin{theorem}\label{525}
Let $n$ be a positive integer that is not a multiple of $2$ or $3$, divisible by $5$, and not a multiple of $25$. For such $n$, there is no independent set of $n^{2}$ queens on $\mathbb{Z}_n^3$.
\end{theorem}

\begin{proof}
Suppose that queens are placed on
$$(x_1,y_1,z_1),\dots,(x_{n^2},y_{n^2},z_{n^2})\in \mathbb{Z}_n^3.$$
We claim that for these fields $(x,y,z)$, the pairs $(y,z)$ attain pairwise distinct values, and similarly for $(x\pm y,z)$ and $(x\pm y,x\pm z)$ with any given signs. This is the case because if two fields gave rise to the same value of a pair, then they would be apart by a multiple of $(1,0,0)$, $(1,\mp 1,0)$, or $(1,\mp 1,\mp 1)$, respectively. Hence, each of these $n^2$ pairs take on every value in $\mathbb{Z}_n^2$.

Let $k$ and $l$ be positive integer exponents, and consider the sum
$$\sum_{i=1}^{n^2} x_i^k y_i^l.$$
Since $(x_i,y_i)$ takes on every possible value precisely once, this equals
$$\sum_{(x,y)\in \mathbb{Z}_n^2}x^ky^l=\left(\sum_{x\in \mathbb{Z}_n}x^k\right)\left(\sum_{y\in \mathbb{Z}_n}y^l\right).$$

The value of a power sum $S_k=\sum_{x=0}^{n-1}x^k$ can be computed recursively, due to the well-known combinatorial identity $\sum_{x=k}^{n-1} \binom {x}{k}=\binom{n}{k+1}$, which we write as
$$\sum_{x=0}^{n-1} \frac{x(x-1)\dots(x-k+1)}{k!}=\frac{n(n-1)\dots(n-k)}{(k+1)!}.$$
Upon multiplying both sides $k!$, we can expand $x(x-1)\dots(x-k+1)$, the sum of $x^k$ and lower order terms, which, when summed over $x$, yields the sum of $S_k$ and an integer combination of $S_{k-1},\ldots,S_0$. Since $n$ is not a multiple of $2$ or $3$, it divides the integer $\frac1{k+1} n(n-1)\dots(n-k)$ for $k\le 3$. Since $n$ is a multiple of $5$, for $k=4$, this expression is $\frac{n}{5}(-1)(-2)(-3)(-4)\equiv -\frac{n}{5} \pmod n$. Thus, we deduce that $S_k\equiv 0\pmod n$ if $k\le 3$ and $S_4\equiv -\frac{n}{5}\pmod n$.

Projecting to $\mathbb{Z}_n$, these considerations motivate us to work with
$$\sum_{i=1}^{n^2} x_i^4 y_i^4=S_4\cdot S_4,$$
which does not vanish, provided that $n$ is not divisible by $25$. If we use the shorthand $\sum f(x,y,z)$ to mean $\sum_{i=1}^{n^2} f(x_i,y_i,z_i)$, then since pairs of the type $(x\pm y,z)$ and $(x\pm y,x\pm z)$ also attain every possible value,
$$\sum y^4(x\pm z)^4=S_4^2,$$
$$\sum (x\pm y)^4(x\pm z)^4=S_4^2.$$
To cancel terms, we add, subtract, and factor:
\begin{equation}
\sum \left[(x+y)^4+(x-y)^4\right]\cdot \left[(x+z)^4+(x-z)^4\right]=4\cdot S_4^2,
\end{equation}
\begin{equation}
\sum \left[(x+y)^4+(x-y)^4-2y^4\right]\cdot \left[(x+z)^4+(x-z)^4-2z^4\right]= 0.
\end{equation}
Since $(x+y)^4+(x-y)^4=2(x^4+6x^2y^2+y^4)$, (6.2) shows $\sum (x^4+6x^2y^2)(x^4+6x^2z^2)=\sum x^8+\sum x^6z^2+\sum x^6y^2+\sum x^4y^2z^2$ to vanish. Of course, we know that $\sum x^6y^2=S_6S_2$ vanishes, as well as $\sum x^6z^2=S_6S_2$ and $\sum x^8=S_8S_0$. Thus, we learn that $\sum x^4y^2z^2$ vanishes modulo $n$. Returning to (6.1),
$$\sum (x^4+6x^2y^2+y^4)(x^4+6x^2z^2+z^4)=S_4^2$$
implies that
\begin{align*}
    & \sum (x^8+6x^6z^2+6x^6y^2) \\
    + &\sum (x^4y^4+x^4z^4+y^4z^4)\\
    + & \sum (6x^2y^2z^4+6x^2y^4z^2+36x^4y^2z^2)\equiv S_4^2 \pmod n.
\end{align*}
Of these three terms, we know that the first term vanishes, and the third vanishes by virtue of $\sum x^4y^2z^2=0$ and its variants. The remaining second term is $3\cdot S_4^2$, whence we arrive at a contradiction, $3S_4^2=S_4^2$, i.e. $S_4^2=0$ in $\mathbb{Z}_n$.
\end{proof}

Our proof illustrates the limitations of an impossibility proof involving sums over all possible values modulo $n$. If $n$ were a multiple of $7$, say, in $d$ dimensions, there would be $\frac{3^d-1}{2}$ equations involving the sum of sixth powers, but far more homogeneous monomials of degree $7(d-1)$ to cancel. For this reason, it would be a breakthrough to determine, even for $d=3$, when there is an independent set of $n^{d-1}$ queens on $\mathbb{Z}_n^d$.

Interestingly, Monsky further proved that there is no independent set of $n-1$ queens on $\mathbb{Z}_n^2$ if $n$ is a multiple of $3$ or $4$ \cite{Mon}, in which case at most $n(n-2)=n^2-2n$ independent queens can be placed on $\mathbb{Z}_n^3$. Compared to this, it would seem unlikely that the upper bound $n^2-1$ from Theorem \ref{525} can be attained.

\bigskip

\textbf{Acknowledgements:} The author is not funded by a research grant, but would like to express gratitude for the kind guidance of PhD supervisor Imre Leader, as well as the financial support of Trinity College, Cambridge.

\textsc{Department of Pure Mathematics and Mathematical Statistics, University \
of Cambridge, Wilberforce Road, Cambridge CB3 0WB.} \\ \\
\textit{E-mail address:} \texttt{kkw25@cam.ac.uk}

\section*{Appendix}

\begin{lemma}
Let $n$ be an odd positive integer, and suppose that $n-1$ queens are placed on $\mathbb{Z}_n^2$ without conflict. Then one can place one more queen without conflict.
\end{lemma}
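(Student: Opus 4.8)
The plan is to record that a queen at $(x,y)$ occupies four lines: its column $x$, its row $y$, its sum-diagonal $x+y$, and its difference-diagonal $x-y$, all read in $\mathbb{Z}_n$. As noted in the preceding constructions, two queens conflict precisely when they agree in one of these four coordinates, so a conflict-free placement is equivalent to the four coordinate-sequences $(x_i)$, $(y_i)$, $(x_i+y_i)$, $(x_i-y_i)$ each being injective. With $n-1$ queens, each of the four classes of $n$ parallel lines therefore misses exactly one value; denote by $c$, $r$, $s$, $e$ the unused column, row, sum, and difference respectively.

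First I would argue that any additional queen is forced onto the field $(c,r)$. Since the existing queens exhaust every column but $c$ and every row but $r$, a non-conflicting new queen must lie in column $c$ and row $r$; moreover $(c,r)$ is empty, as no queen sits in column $c$. It then remains only to check that this single candidate field also avoids the two used diagonal classes, that is, that $c+r=s$ and $c-r=e$ in $\mathbb{Z}_n$.

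The key step is a summation identity that exploits the oddness of $n$. Summing the column coordinates over all $n-1$ queens gives $\sum_i x_i=\bigl(\sum_{v\in\mathbb{Z}_n}v\bigr)-c=-c$, because $\sum_{v\in\mathbb{Z}_n}v=\tfrac{n(n-1)}2\equiv 0\pmod n$ when $n$ is odd; the same computation yields $\sum_i y_i=-r$, $\sum_i(x_i+y_i)=-s$, and $\sum_i(x_i-y_i)=-e$. Comparing $\sum_i(x_i+y_i)=\sum_i x_i+\sum_i y_i$ gives $-s=-c-r$, hence $s=c+r$, and comparing $\sum_i(x_i-y_i)=\sum_i x_i-\sum_i y_i$ gives $e=c-r$. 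Thus the forced field $(c,r)$ misses all four used lines, and a queen may be added there without conflict.

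The only genuine obstacle is that the two diagonal constraints are a priori independent of the row and column constraints and need not be jointly satisfiable at $(c,r)$; the residue-sum identity is exactly what reconciles them. It is precisely here that the hypothesis that $n$ is odd is indispensable, since for even $n$ one has $\sum_{v\in\mathbb{Z}_n}v=\tfrac n2\neq 0$, and the argument—like the statement itself, which fails for $n\equiv\pm2\pmod{12}$ by \thref{2mod4no3}—breaks down.
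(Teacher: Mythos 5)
Your proof is correct and follows essentially the same route as the paper: both arguments rest on the identity $\sum_{v\in\mathbb{Z}_n}v=\tfrac{n(n-1)}{2}\equiv 0\pmod n$ for odd $n$, applied to the four coordinate sums to force the missing sum- and difference-diagonals to pass through the unique field determined by the missing row and column. The only (inessential) difference is that the paper first translates coordinates so that this forced field is $(0,0)$, while you work directly with the missing values $c,r,s,e$.
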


\begin{proof}
Let our queens occupy the fields $(x_1,y_1),\dots,(x_{n-1},y_{n-1})$. It is known that all the $x_i$, $y_i$, $x_i\pm y_i$ values are distinct. Our objective is to find which value in $\mathbb{Z}_n$ is not attained.

By translating the rows and columns, we may suppose that $x_i$ and $y_i$ take on every value but $0$. Let us then inquire as for whether a queen can be placed on $(0,0)$ without conflict. 

Denote by $a_+$ and $a_-$ the missing value of $x_i+y_i$ and $x_i-y_i$, respectively. If we can determine these to equal $0$, then the diagonals of $(0,0)$ are not in check.

It is known that $\sum_{i=1}^{n-1}x_i$ and $\sum_{i=1}^{n-1}y_i$ equal $S_1=\sum_{i=1}^{n-1}i=\frac{n(n-1)}{2}$. If $n$ is odd, this is divisible by $n$. Therefore,
$$\sum_{i=1}^{n-1}(x_i\pm y_i)\equiv S_1-a_\pm\pmod n$$
yields $a_\pm \equiv 0\pmod n$, and a queen on $(0,0)$ is established without conflict.
\end{proof}

\begin{theorem}[Monsky \cite{Mon}] \label{mon}
The independence number of the queen graph on $\mathbb{Z}_n^2$ is $n$ if $n\equiv \pm 1 \pmod 6$, $n-1$ queens if $n\equiv \pm 2\pmod {12}$, and $n-2$ otherwise.
\end{theorem}

\begin{proof} Suppose $n$ queens are possible without conflict, on $(x_1,y_1),\dots,(x_n,y_n)$. Then each of $x_i,y_i,x_i+y_i,x_i-y_i$ are permutations of $\mathbb{Z}_n$. The sum of elements of $\mathbb{Z}_n$ is $S_1=\frac{n(n-1)}{2}$, which is divisible by $n$ if and only if $n$ is odd. Now $\sum(x_i+y_i)=S_1$, but also $\sum(x_i+y_i)=\sum x_i+\sum y_i=2S_1$. It follows that $S_1=0$, and so $n$ is odd.

Suppose that $n-1$ queens are possible without conflict and $n$ is odd. By the previous lemma, $n$ queens are possible. The sum of squares of elements of $\mathbb{Z}_n$ is $S_2=\frac{n(n-1)(2n-1)}{6}$, which is divisible by $n$ if and only if $n$ is not a multiple of $3$. Since
$$0=\sum(x_i+y_i)^2+(x_i-y_i)^2-2x_i^2-2y_i^2=-2S_2$$
and $n$ is odd, it follows that $n$ cannot be a multiple of $3$ \cite{Pol}.

Suppose that $n-1$ queens are possible without conflict and $n$ is even. Let our queens occupy the fields $(x_1,y_1),\dots,(x_{n-1},y_{n-1})$. It is known that each of $x_i,y_i,x_i+y_i,x_i-y_i$ are permutations of $\mathbb{Z}_n$ with a missing value. Without loss of generality, the missing values are $0,0,a_+,a_-$, respectively. Since $n$ is even, $S_1=\frac n2$, and by the same reasoning as for the lemma, $a_\pm =\frac n2$. Hence, in $\mathbb{Z}_n$,
$$\sum_{i=1}^{n-1}(x_i\pm y_i)^2= S_2-a_\pm^2.$$
In fact, if $n$ is even, then $(X+n)^2-X^2$ is a multiple of $2n$, so swapping $x_i\pm y_i$ for its value modulo $n$ changes the square by a multiple of $2n$. Therefore, from the parallelogram identity in $\mathbb{Z}$,
$$\sum_{i=1}^{n-1}(x_i+y_i)^2+\sum_{i=1}^{n-1}(x_i-y_i)^2=2\sum_{i=1}^{n-1}x_i^2+2\sum_{i=1}^{n-1}y_i^2,$$
we deduce $2\left(S_2-n/2\right)\equiv 4S_2\pmod {2n}$, so $S_2=-(n/2)^2$. If $n$ is divisible by $4$, then $\frac{n^2}{4}$ vanishes modulo $n$, but $S_2$ does not, because $(n-1)(2n-1)$ is odd. This is a contradiction, so $n$ cannot be a multiple of $4$ in this case. If $n$ is an even multiple of $3$, then $\frac{4S_2}{n}$ must be an integer, yet $(n-1)(2n-1)$ cannot be divided by $3$, a contradiction.

Therefore, if $n$ is a multiple of $2$ or $3$, there cannot be $n$ independent queens, and if $n$ is a multiple of $4$ or $3$, there cannot be $n-1$ independent queens. It remains for us to exhibit examples of $n$, $n-1$, or $n-2$ non-conflicting queens in the cases of $n\equiv \pm 1\pmod 6$, $n\equiv \pm 2\pmod{12}$, and otherwise.

Firstly, let $n\equiv \pm1 \pmod 6$, meaning that $n$ is coprime with $2$ and with $3$. Consider the fields labelled with $(t,2t)\in \mathbb{Z}_n^2$, where $t\in\mathbb{Z}_n$ is a parameter. We claim that if queens are placed on these fields, they will form an independent set.

Notice that when a queen moves from $(x,y)$ by $(0,1)$, the value of $x$ is invariant. Similarly, moving by $(1,0)$, the value of $y$ is invariant. When moving by $(\pm 1,\pm 1)$, the value of $\pm x \mp y$ is invariant. Hence, if two queens are in conflict, it is because the values of $x$, of $y$, of $x+y$, or of $-x+y$ agree on the fields they each occupy.

Returning to the fields $(x,y)=(t,2t)$, the values of $x$ are precisely all possible $t\in \mathbb{Z}_n$. The values of $y$, of $x+y$, and of $-x+y$ equal $2t$, $3t$, and $t$, respectively. These are pairwise distinct because $n$ is coprime with $2$ and with $3$. Hence, the queens cannot be in conflict, as claimed.

Secondly, let $n\equiv \pm 2 \pmod{12}$. Since $n$ is even, $n=2m$, where $m\equiv \pm 1 \pmod 6$. Consider
$$(x,y)=(t,3t),\quad 0\le t\le m-1\quad \text{and} \quad (x,y)=(t,3t+3),\quad m\le t\le 2m-2.$$
Clearly, the $x$ values are distinct. The values of $y$ are distinct because $n$ is coprime with $3$. The values of $-x+y$ are the even $2t$ for $0\le t\le m-1$ and the odd $2t+3$ for $m\le t\le 2m-2$, which are distinct, because the integers $0,2,\dots,n-2$ are distinct modulo $n$. The values of $x+y$ are the even $4t$ for $0\le t\le m-1$ and the odd $4t+3$ for $m\le t\le 2m-2$, which are also distinct, because the integers $0,4,\dots,2n-4$ cannot differ by $n$, which is not a multiple of $4$. Hence, if queens are placed on these $n-1$ fields, they will not be in conflict.

Thirdly, let $n\equiv \pm 3 \pmod{12}$, of the form $n=6m+3$. The fields
$$(x,y)=\begin{cases}
    (t,2t),\quad 0\le t\le m-1, \\
    (t,2t+1),\quad m\le t\le 3m, \\
    (t,2t),\quad 3m+2\le t\le 4m+1, \\
    (t,2t-1),\quad 4m+3\le t\le 6m+2
\end{cases}$$
are in distinct columns. Their $y$ values are congruent with even integers between $(2m+1)-(6m+3)=2(-2m-1)$ and $2(4m+1)$ modulo $n$, by shifting the instances of $2t+1$ or $2t-1$ by $-n$. Thus, differing by less than $2n$, they are distinct. The values of $-x+y$ are distinct, given by the integers from $0$ to $n-2$, excepting $m$. The values of $x+y$ are $3t$ for $0\le t\le m-1$ and also for $3m+2\le t\le 4m+1$, the latter being $n=3(2m+1)$ more than $3t$ for $m+1\le t\le 2m$, where the integers $3t$ for $0\le t\le 2m$ differ by less than $n$. Also for this reason, the values of $x+y$ that are $1$ modulo $3$ or $2$ modulo $3$ are all distinct in $\mathbb{Z}_n$. Thus, $n-2$ queens can be placed on these fields independently.

Fourthly, let $n=12m+4$, where $m$ is a positive integer. One sees that
$$(x,y)=\begin{cases} (t,3t),\quad 0\le t\le 2m-1, \\
(t,3t+2),\quad 2m\le t\le 5m-1, \\
(t,3t),\quad 5m+1\le t\le 6m+1, \\
(t,3t+1),\quad 6m+2\le t\le 9m+1, \\
(t,3t+3),\quad 9m+2\le t\le 12m+2 \end{cases}$$
are in distinct columns, with $y$ values given by the multiples of $3$ in the intervals $[0,6m-3]$, $[18m+6,27m+3]$ (by adding $n$), $[15m+3,18m+3]$, $[6m+3,15m]$ (by subtracting $n$), and $[27m+9,36m+9]$, which are disjoint and contained in $[0,3n-3]$. Since $n$ is coprime with $3$, these integers do not differ by a multiple of $n$, whence they are distinct in $\mathbb{Z}_n$. The values of $-x+y$ are given by even integers in $[0,4m-2]\cup [4m+2,10m]\cup [10m+2,12m+2]$ and by odd integers in $[12m+5,18m+3]\cup [18m+7,24m+7]$, which are all distinct. Additionally, the values of $x+y$ differ modulo $4$, taking on $4t$ with $0\le t\le 2m-1$ and $5m+1\le t\le 6m+1$, which are distinct as $n=4(3m+1)$, as well as a range of values $4t+1$, $4t+2$, and $4t+3$. Therefore, the chosen values of $(x,y)$ form an independent set of $n-2$ queens.

Fifthly, let $n=12m+6$, where $m$ is a positive integer. Define
$$(x,y)=\begin{cases} (t,3t),\quad 0\le t\le 2m-1,\\
(t,3t+1),\quad 2m\le t\le 6m+1, \\
(t,3t),\quad t=6m+2, \\
(t,3t-3),\quad 6m+4\le t\le 8m+3,\\
(t,3t-4),\quad 8m+5\le t\le 12m+5. \end{cases}$$
A reader who has followed the proof thus far will enjoy checking that $n-2$ queens placed on the fields described are not in conflict. For the $y$ values, we note how $n=3(4m+2)$. Moreover, inspecting the $\mp x+y$ values bears comparison to the second case, because $n\equiv 2\pmod 4$.

Sixthly, let $n=12m-4$, where $m$ is a positive integer, using
$$(x,y)=\begin{cases} (t,3t),\quad 0\le t\le 2m-2, \\
(t,3t-2),\quad 2m\le t\le 5m-3, \\
(t,3t),\quad 5m-2\le t\le 6m-3, \\
(t,3t-1),\quad 6m-2\le t\le 9m-5,\\
(t,3t-3),\quad 9m-3\le t\le 12m-5. \end{cases}$$
The $y$ values are none other than the multiples of $3$ in the intervals $[0,6m-6]$, $[18m-6,27m-15]$, $[15m-6,18m-9]$, $[6m-3,15m-12]$, and $[27m-12,36m-18]$, respectively, hence distinct. Now, the values of $-x+y$ are $2t$ or $2t-2$ for $0\le t<\frac n2$, which are even, and $2t-1$ or $2t-3$ for $\frac n2\le t<n$, which are odd. Thus, no two $-x+y$ values are the same. As for the values $x+y$, recall how $n=4(3m-1)$. It follows that an independent set of $n-2$ queens exists in this case.

Seventhly, let $n=12m$, where $m\ge 2$ is an integer, and place queens upon
$$(x,y)=\begin{cases}
    (t,3t),\quad 0\le t\le 3m-2, \\
    (t,3t+3),\quad 3m-1\le t\le 4m-3, \\
    (t,3t+2),\quad 4m-1\le t\le 6m-2, \\
    (t,3t+1),\quad 6m-1\le t\le 9m-2, \\
    (t,3t-2),\quad 9m\le t\le 10m-1, \\
    (t,3t-1),\quad 10m\le t\le 12m-1.
\end{cases}$$
By distinguishing the $y$ values modulo $3$, the $-x+y$ values modulo $2$, and furthermore the $x+y$ values modulo $4$, it is evident how these queens are not in conflict.

\begin{figure}[!htb]
\centering
\begin{tikzpicture}%[scale=0.7]

\draw[step=0.6, black] (0,0) grid (7.2,7.2);
\node at (0.3,0.9) {\symqueen};
\node at (0.9,2.1) {\symqueen};
\node at (1.5,3.3) {\symqueen};
\node at (2.1,4.5) {\symqueen};

\node at (3.3,0.3) {\symqueen};
\node at (3.9,1.5) {\symqueen};
\node at (4.5,2.7) {\symqueen};
\node at (5.1,3.9) {\symqueen};

\node at (5.7,5.7) {\symqueen};
\node at (6.9,6.3) {\symqueen};

%\draw[thin,dashed] (0,0) -- (1,1);
%\draw[thin,dashed] (1,0) -- (1,1);
%\draw[red,thin,dashed] (2,0) -- (1,2);
%\draw[red,thin,dashed] (1,1) -- (1,2);
\end{tikzpicture} 
\caption{$10$ queens on $\mathbb{Z}_{12}^2$}
\label{case12}
\end{figure}

Finally, if $n=4$, place queens on $(0,0)$ and $(1,2)$. If $n=6$, place queens on $(0,0)$, $(1,3)$, $(2,5)$, and $(3,2)$ -- their difference values are $0,2,3,5$ and their sum values are $0,4,1,5$ in $\mathbb{Z}_6$. If $n=12$, it is easy to check visually that on Figure \ref{case12}, the set of queens is independent. \end{proof}

\end{document}